\documentclass[10pt]{amsart} 
\usepackage{amssymb}
\usepackage[english]{babel}
\usepackage[cp1250]{inputenc}   
\usepackage[T1]{fontenc}
\usepackage{hyperref}
\usepackage[all]{xy}            
\usepackage{tikz}
\usepackage{mathtools}
\usepackage{longtable}

\newtheorem{Thm}{Theorem}[section]
\newtheorem{Prp}[Thm]{Proposition}

\theoremstyle{definition}

\theoremstyle{remark}

\numberwithin{equation}{section}

\newcommand{\N}{\mathbb{N}}

\def\Ker{\mathrm{Ker}}
\def\Im{\mathrm{Im}}
\def\Hom{\mathrm{Hom}}
\def\mod{\mathrm{mod}}      
\def\^{\textasciicircum}
\DeclareMathOperator*{\bigast}{\raisebox{-0.6ex}{\scalebox{2.2}{$\ast$}}}
\newcommand{\icases}[7]{#7\{\!\begin{smallmatrix}
                                #5#1\hfill;  & ~#5#2\hfill &\!\!\\[#6]
                                #5#3\hfill;  & ~#5#4\hfill &\!\!
                              \end{smallmatrix}}

\begin{document}
\title{Tensor, Symmetric, Exterior, and Other\\ Powers of Persistence Modules}
\author{Leon Lampret}
\address{Department of Mathematics, University of Ljubljana, Slovenia}
\email{leon.lampret@imfm.si}
\date{March 28, 2015}
\keywords{persistent homology, chain complex, tensor product, group action}
\subjclass{55N99, 55-04, 13P20, 18G35}

\begin{abstract}
We reformulate the persistent (co)homology of simplicial filtrations, viewed from a more algebraic setting, namely as the (co)homology of a chain complex of graded modules over polynomial ring $K[t]$. We also define persistent (co)homology of groups, associative algebras, Lie algebras, etc.
\par Then we obtain formulas for tensor powers $T^n(M),S^n(M),\Lambda^{\!n}(M)$ where $M$ is a persistence module. We discuss the cyclic and dihedral powers of persistence modules, and more generally quotients of $T^n(M)$ by a group action.
\end{abstract}

\maketitle

\subsection*{Results} We formulate an explicit chain complex of modules over ring $K[t]$, whose (co)homology is the persistent (co)homology of a simplicial filtration. 
\par We obtain explicit formulas for tensor powers $T^n\!M, S^n\!M, \Lambda^{\!n}\!M$, and the generalization $T^n_GM$ (special cases are the cyclic and dihedral powers), where $M\cong R^r\!\oplus\!\bigoplus_{i=1}^sR/Ra_i$. We also compute presentations of algebras $T(M), S(M), \Lambda(M)$.


\vspace{4mm}
\section{Simplicial Filtrations}
A filtration of a simplicial complex $\Delta$ is a sequence of complexes $\Delta_0,\Delta_1,\ldots,\Delta_N$, such that $\Delta_i$ is a subcomplex of $\Delta_{i+1}$ for every $i$, and $\Delta_N\!=\!\Delta$. They arise naturally in applied topology, e.g. $\Delta_i$ is the \v{C}ech complex or Vietoris-Rips complex with radius $r_i$ (so $r_0\!<\!\ldots\!<\!r_N$).
\par The desire is to know the holes of $\Delta_i$ (i.e. nonzero elements of $H_\ast(\Delta_i)$) which do not seem to go away and `persist' (i.e. are also nonzero in $H_\ast(\Delta_{i+j})$ for many $j$), as they are the characteristic features that tell us how $\Delta$ is built out of all $\Delta_i$.

\vspace{4mm}
\section{Persistence Modules}
In this section, we describe an algebraic formulation of persistent (co)homology of a simplicial filtration, invented in \cite{citeZomorodianCarlssonCTP} and \cite{citeEdelsbrunnerLetscherZomorodianTPS}, and reformulated in \cite{citeZomorodianCarlssonCPH}.\vspace{2mm}

\subsection{Gradings} A ring $R$ is \emph{graded} if it has a decomposition $R\!=\!\bigoplus_{k\in\N}\!R_k$ such that $R_k\!\cdot\!R_l\!\subseteq\!R_{k+l}$ for all $k$ and $l$.  Given a graded ring $R$, an $R$-module is \emph{graded} if it has decomposition $M\!=\!\bigoplus_{k\in\N}\!M_k$ such that $R_k\!\cdot\!M_l\!\subseteq\!M_{k+l}$ for all $k$ and $l$. For any $x\!\in\!R_k$ or $x\!\in\!M_k$ we write $\deg x\!=\!k$. Thus $R$ is itself a graded $R$-module.
\par Given graded modules $M$ and $N$, a \emph{graded morphism} is a module morphism $f\!:M\!\rightarrow\!N$ such that $f(M_k)\!\subseteq\!N_k$ for all $k$. Then $\Ker f\!=\!\bigoplus_{k\in\N}\!f^{-1}(0)\!\cap\!M_k$ and $\Im f\!=\!\bigoplus_{k\in\N}\!f(M_k)$ are graded modules. A submodule $M'$ of a graded module $M$ is a \emph{graded submodule} when $M'\!=\!\bigoplus_{k\in\N}\!M'\!\cap\!M_k$. Then the quotient $M/M'=\bigoplus_{k\in\N}\!M_k/M'\!\cap\!M_k$ is a graded module.\vspace{2mm}

\subsection{Chain Complex} Let $\Delta_\ast\!:\Delta_0\!\leq\! \Delta_1\!\leq\! \Delta_2\!\leq\! \ldots\!\leq\!\Delta_N\!=\!\Delta$ be a filtration of a finite simplicial complex and $\preceq$ a total order on the set of vertices $\Delta^{\![0]}$. Let $K$ be any field and let the univariate polynomial ring $R\!=\!K[t]$ (which is a PID by \cite[3.5.12]{citeGrilletAA}) have the standard grading $R\!=\!\bigoplus_{k\in\N}\!Kt^k$, or equivalently $\deg t\!=\!1$. The \emph{persistence complex} of $\Delta_\ast$ is the chain complex of graded $R$-modules \vspace{0pt}
\begin{longtable}[c]{c}
$P_\ast(\Delta_\ast)\!:\;\; \ldots\longrightarrow R^{(\Delta^{\![n]})} \overset{\partial_n}{\longrightarrow} R^{(\Delta^{\![n\!-\!1]})} \longrightarrow \ldots \longrightarrow  R^{(\Delta^{\![1]})} \overset{\partial_1}{\longrightarrow}  R^{(\Delta^{\![0]})}$\\[-0pt]
with $\deg\sigma\!=\!\min\{k; \sigma\!\in\!\Delta_k\}$, so $R^{(\Delta^{\![n]})}\!=\!\bigoplus_{k\geq0}\!R^{(\Delta^{\![n]}_k\setminus \Delta^{\![n]}_{k\!-\!1})}$ is the grading,\\[-0pt]
and boundary map sends $\Delta^{\![n]}\!\ni\!\sigma \longmapsto \sum_{\tau\in \Delta^{\![n\!-\!1]}}[\sigma,\tau]t^{\deg\!\sigma-\deg\!\tau}\tau$,
\end{longtable}\vspace{0pt}
\noindent where $[\sigma,\tau]=(-1)^i$ if $\sigma\!\!\setminus\!\tau$ is the $i$-th vertex of $\sigma$ w.r.t. $\preceq$, and $[\sigma,\tau]=0$ if $\tau\!\nsubseteq\!\sigma$.
\par For any $R$-module $M$, the \emph{persistent (co)homology} of $\Delta_\ast$ is $$H_\ast(P_\ast(\Delta_\ast)\!\otimes_R\!M)\text{ ~~~~~and~~~~~ }H^\ast(\Hom_R(P_\ast(\Delta_\ast),M)).\vspace{1mm}$$
Every $K$-module is a graded $R$-module, with everything concentrated in degree $0$ and $tM\!=\!0$. But the above definition applies to an arbitrary graded $R$-module $M$.\vspace{2mm}

\subsection{Example} For any set $I$, let $R^{(I)}$ denote the free module on $I$. If index set $I\!=\!\{i_1,\ldots,i_r\}$ is finite, then denote $R^{(i_1,\ldots,i_r)}\!=R^{(I)}$. Below left is an instance of a simplicial filtration, and below right is its corresponding chain complex.
$$\raisebox{-0.87\height}{\includegraphics[width=0.35\textwidth]{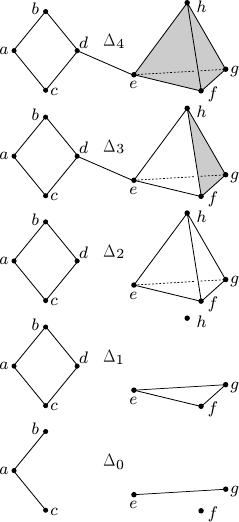}}  \hspace{23pt}
\xymatrix@R=14mm@C=8mm{&&R^{(e\!f\!g,e\!f\!h,egh)}\ar|-<<<<<<<<<<<<<<<<<<{t^2}[ldd]\ar|-<<<<<<<<<<<<<<<<<<<<{t^3}[lddd]\ar|-<<<<<<<<<<<<<<<<<<<<<<<<<<<<<{t^4}[ldddd]\ar@{}|-{\oplus}[d]\\
                                &R^{(de)}\ar|-{t^2}[ldd]\ar|-<<<<<<<<<<<<<<<<<<<<<{t^3}[lddd]\ar@{}|-{\oplus}[d]    &R^{(f\!gh)}\ar|-<<<<<<<<<{t}[ld]\ar|-<<<<<<<<<<<<<<<<{t^2}[ldd]\\
                                &R^{(eh,f\!h,gh)}\ar|-<<<<<<<<{t}[ld]\ar|-<<<<<<<<<<{t^2}[ldd]\ar@{}|-{\oplus}[d]\\
R^{(d,h)}\ar@{}|-{\oplus}[d]    &R^{(bd,cd,e\!f\!,f\!g)}\ar|-{1}[l]\ar|-{t}[ld]\ar@{}|-{\oplus}[d]\\
R^{(a,b,c,e,f\!,g)}             &R^{(ab,ac,eg)}\ar|-{1}[l]\\}$$
The following \textsc{Sage} code that uses the ring $\mathbb{Q}[t]$ and two sparse matrices\vspace{3pt}
\begin{longtable}[l]{@{\hspace{2mm}}l}
    \tt\small R.<t>=QQ[];\\
    \tt\small matrix(R,8\!,11\!,\{(0\!,0)\!:-1\!,(1\!,0)\!:1\!,(0\!,1)\!:-1\!,(2\!,1)\!:1\!,(3\!,2)\!:-1\!,(5\!,2)\!:1\!,(1\!,3)\!:-t\!,(6\!,3)\!:1\!,\\
    \tt\small                 (2\!,4)\!:-t\!,(6\!,4)\!:1\!,(3\!,5)\!:-t\!,(4\!,5)\!:t\!,(4\!,6)\!:-t\!,(5\!,6)\!:t\!,(3\!,7)\!:-t\^2,(7\!,7)\!:t,(4\!,8)\!:-t\^2,\\
    \tt\small                 (7\!,8)\!:t,(5\!,9)\!:-t\^2,(7\!,9)\!:t,(6\!,10)\!:-t\^2,(3\!,10)\!:t\^3\}).elementary\_divisors();\\
    \tt\small matrix(R,11\!,4\!,\{(9\!,0)\!:t\!,(8\!,0)\!:-t\!,(6\!,0)\!:t\^2\!,(6\!,1)\!:t\^3\!,(2\!,1)\!:-t\^4\!,(5\!,1)\!:t\^3\!,(8\!,2)\!:t\^2\!,\\
    \tt\small                 (7\!,2)\!:-t\^2\!,(5\!,2)\!:t\^3\!,(9\!,3)\!:t\^2\!,(7\!,3)\!:-t\^2\!,(2\!,3)\!:t\^4\}).elementary\_divisors();\\
\end{longtable}\vspace{2pt}
\noindent tells us that $\partial_1$ and $\partial_2$ have elementary divisors $1,1,1,1,t,t,t^3$ and $t,t^2,t^3$.
Thus the nonzero (co)homologies over a field of characteristic $0$ of this filtration are
\begin{longtable}[c]{l}
$H_0\cong R \!\oplus\! \frac{R}{Rt^3} \!\oplus\! \frac{R}{Rt} \!\oplus\! \frac{R}{Rt}$ generated by $\{a,e,f,h\}$,\\[3pt]
$H_1\cong R \!\oplus\! \frac{R}{Rt^3} \!\oplus\! \frac{R}{Rt^2} \!\oplus\! \frac{R}{Rt}$ generated by\\
~~~~~~~~{\small $\{ab\!+\!bd\!-\!cd\!-\!ac, ef\!+\!fh\!-\!eh, eg\!+\!gh\!-\!eh, fg\!+\!gh\!-\!fh\}$},\\
~~~~~~~~{with \small $ef\!+\!fg\!-\!eg\!=\! (ef\!+\!fh\!-\!eh) \!-\! (eg\!+\!gh\!-\!eh) \!+\! (fg\!+\!gh\!-\!fh)$},\\[3pt]
$H_2\cong R$ generated by $\{efg\!-\!efh\!+\!egh\!-\!fgh\}$,\\[6pt]
$H^0\cong R,\;\; H^1\cong R\!\oplus\!\frac{R}{Rt^3}\!\oplus\!\frac{R}{Rt}\!\oplus\!\frac{R}{Rt},\;\; H^2\cong R\!\oplus\!\frac{R}{Rt^3}\!\oplus\!\frac{R}{Rt^2}\!\oplus\!\frac{R}{Rt}$.
\end{longtable}\vspace{2mm}

\subsection{Interpretation} The indeterminate $t\!\in\!K[t]$ represents 'time', so that $\Delta_t$ grows as $t$ passes. If $\sigma\!\in\!\Delta_k$, then $t\sigma\!\in\!\Delta_{k+1}$, so multiplication by $t^l$ means going $l$ steps up in the filtration. The complex $P_\ast(\Delta_\ast)$ is finite and each of its modules has finite rank, so by the classification theorem \cite[8.6.3, p.339]{citeGrilletAA}, each persistent (co)homology is isomorphic to $R^r\!\oplus\!R/Rf_1\!\oplus\!\ldots\!\oplus\!R/Rf_s$ for some $f_1,\ldots,f_s\!\in\!K[t]$. Since $\deg t^{\deg\!\sigma-\deg\!\tau}\tau=\deg\sigma\!-\!\deg\tau\!+\!\deg\tau=\deg\sigma$, all maps $\partial_1,\ldots,\partial_N$ are graded module morphisms, so $\frac{\Ker\partial_n}{\Im\partial_{n+1}}$ is a graded module, hence each persistent (co)homology is isomorphic to $R^r\!\oplus\!R/Rt^{l_1}\!\oplus\!\ldots\!\oplus\!R/Rt^{l_s}$ for some $l_1,\ldots,l_s\!\in\!\N$. A generator of $R^r$ has \emph{lifetime $\infty$}, because multiplying with any $t^k$ does not annihilate it. A generator of $R/Rt^{l_i}$ has \emph{lifetime $l_i$}, because multiplying with $t^{l_i}$ annihilates it, meaning the `hole' is present for $l_i$ steps of the filtration and then gets `filled up'.
\par If $N\!=\!0$, then persistent (co)homology is the usual simplicial (co)homology of $\Delta$. Notice that persistent (co)homology computes simplicial (co)homology of \emph{every} $\Delta_k$, even though the persistence chain complex for $\Delta_\ast$ and Poincar\'{e} chain complex for $\Delta$ have the same bases (the additional information for the former is encoded in all the coefficients $t^k$). The (co)homology of $\Delta_N$ is precisely the part with infinite lifetime of the persistent (co)homology of $\Delta_\star$.\vspace{2mm}

\subsection{Generalization} The above construction is also applicable for other homological theories. Let $K$ be a field and $(C_\star,\partial_\star)$ a chain complex of $K$\!-modules in which every $C_n$ admits a basis $B_n$ that is equipped with a filtration $B_{n,0}\!\subseteq\!B_{n,1}\!\subseteq\!\ldots\!\subseteq\!B_{n,N}\!=\!B_n$. This induces a grading $C_n\!=\!\bigoplus_{k\geq0}C_{n,k}$, where $C_{n,k}$ is the free module on $B_{n,k}\!\setminus\!B_{n,k\!-\!1}$, so $C_n$ is the associated graded module of the filtration. If every boundary map is written with respect to the bases $B_n$ and $B_{n-1}$, i.e $\partial_n(b')=\sum_{b\in B_{n\!-\!1}}[b',b]b$ for $b'\!\in\!B_n$, then defining $$\textstyle{\overline{\partial}_n(b')=\sum_{b\in B_{n\!-\!1}}[b',b]\,t^{\deg\!b'-\deg\!b}\,b}$$ makes $(C_\star\!\otimes\!_K\!R,\overline{\partial}_\star)$ a chain complex of graded $R$\!-modules. In this way, we obtain the \emph{persistent} (co)homology of $(C_\star,\partial_\star)$.
\par Let us observe this situation on the three most popular algebraic homology theories. If $G_\star: G_0\!\leq\!G_1\!\leq\!\ldots\!\leq\!G_N$ is an increasing sequence of (semi)groups, then their Cartesian powers also constitute $G_0^n\!\leq\!G_1^n\!\leq\!\ldots\!\leq\!G_N^n$, so this induces \emph{persistent} Eilenberg-MacLane (co)homology of $G_\star$. The symmetric groups, Braid groups, general/special linear groups, $\ldots$ are examples of $G_\star$. If $A_\star: A_0\!\leq\!A_1\!\leq\!\ldots\!\leq\!A_N$ is an increasing sequence of associative $K$-algebras, then their tensor powers also constitute $T^n\!A_0\!\leq\!T^n\!A_1\!\leq\!\ldots\!\leq\!T^n\!A_N$, so this induces \emph{persistent} Hochschild (co)homology of $A_\star$. The matrix algebras, (non)commutative polynomial algebras, Laurent polynomial algebras, formal power series algebras, $\ldots$ are examples of $A_\star$. If $\frak{g}_\star: \frak{g}_0\!\leq\!\frak{g}_1\!\leq\!\ldots\!\leq\!\frak{g}_N$ is an increasing sequence of Lie $K$-algebras, then their exterior powers also constitute $\Lambda^{\!n}\frak{g}_0\!\leq\!\Lambda^{\!n}\frak{g}_1\!\leq\!\ldots\!\leq\!\Lambda^{\!n}\frak{g}_N$, so this induces \emph{persistent} Chevalley (co)homology of $\frak{g}_\star$. The general/special linear Lie algebras, solvable Lie algebras, nilpotent Lie algebras, $\ldots$ are examples of $\frak{g}_\star$.

\vspace{4mm}
\section{Module powers $T^n$\!, $S^n$\!, $\Lambda^{\!n}$}\label{3.TnSnEn}
All tensor products are over a commutative unital ring $R$. If $R$ is a PID, then the ideal $\langle a_1,\ldots,a_s\rangle$ is generated by $\gcd(a_1,\ldots,a_s)$. If $R\!=\!K[t]$ and $a_i\!=\!t^{l_i}\!\in\!R$ for all $i$, then $\gcd(a_1,\ldots,a_k) \!=\! t^{\min(l_1,\ldots,l_k)}$.\vspace{2mm}

\subsection{Tensor Powers} The \emph{$n$-th tensor power} of an $R$-module $M$ is $T^n\!M=\bigotimes_{i=1}^n\!M$, the tensor product of $n$ copies of $M$.\vspace{1mm}

\begin{Prp}\label{Tn}
If $M\cong R^r\!\oplus\!\bigoplus_{i=1}^s\!R/\!Ra_i$, then\vspace{-1mm}
$$T^n\!M\cong R^{r^n}\!\oplus\bigoplus_{k=1}^n \bigoplus_{1\leq i_1,\ldots,i_k\leq s}\! \big(R/\!\langle a_{i_1},\ldots,a_{i_k}\rangle\big)^{\binom{n}{k}r^{n\!-\!k}}.$$
\end{Prp}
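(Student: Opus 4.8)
The plan is to expand $T^n M$ by distributing the tensor product over the direct sum decomposition of $M$, to simplify each resulting summand, and then to collect like terms. First I would write $M \cong \bigoplus_{\alpha \in A} M_\alpha$, where the index set $A$ consists of $r$ indices $\alpha$ with $M_\alpha = R$ and $s$ indices $\alpha$ with $M_\alpha = R/Ra_i$. Because $-\otimes_R-$ commutes with arbitrary direct sums in each variable,
$$T^n M \cong \bigoplus_{(\alpha_1,\ldots,\alpha_n)\in A^n} M_{\alpha_1}\otimes_R\cdots\otimes_R M_{\alpha_n},$$
so each summand is an $n$-fold tensor product of copies of $R$ and of cyclic modules $R/Ra_i$.

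The key algebraic input is to evaluate such a summand. Since $R\otimes_R N\cong N$, every free factor may be discarded, so a summand with exactly $k$ cyclic factors, bearing indices $i_1,\ldots,i_k$, reduces to $\bigotimes_{j=1}^k R/Ra_{i_j}$. I would prove, by induction on $k$, the formula
$$R/Ra_{i_1}\otimes_R\cdots\otimes_R R/Ra_{i_k}\cong R/\langle a_{i_1},\ldots,a_{i_k}\rangle.$$
The base case follows from right-exactness of tensoring: applying $-\otimes_R R/Rb$ to the presentation $R\xrightarrow{a}R\twoheadrightarrow R/Ra$ gives $R/Rb\xrightarrow{a}R/Rb\to R/Ra\otimes_R R/Rb\to 0$, whose cokernel is $R/(Ra+Rb)=R/\langle a,b\rangle$; the inductive step merely adjoins one further generator to the ideal.

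Finally I would count multiplicities. For a fixed $k$ and a fixed ordered tuple $(i_1,\ldots,i_k)\in\{1,\ldots,s\}^k$, a summand isomorphic to $R/\langle a_{i_1},\ldots,a_{i_k}\rangle$ is obtained by choosing which $k$ of the $n$ positions are cyclic, where I read the chosen positions in increasing order to attach the indices $i_1,\ldots,i_k$, and independently choosing one of the $r$ copies of $R$ at each of the remaining $n-k$ positions. This yields multiplicity $\binom{n}{k}r^{n-k}$; summing over $1\le k\le n$ and over all tuples gives the asserted decomposition, while the case $k=0$ supplies the free part $R^{r^n}$ separately. I expect the main obstacle to be precisely this bookkeeping: verifying that indexing the cyclic factors by ordered tuples, under the increasing-order convention on the chosen positions, reproduces the multiplicity $\binom{n}{k}r^{n-k}$ with neither over- nor under-counting. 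The algebraic content is comparatively short once right-exactness of the tensor product is invoked.
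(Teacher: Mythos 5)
Your proposal is correct and follows essentially the same route as the paper: distribute $\otimes_R$ over the direct sum decomposition, identify each mixed summand with $R/\langle a_{i_1},\ldots,a_{i_k}\rangle$, and count positions to get the multiplicity $\binom{n}{k}r^{n-k}$. The only cosmetic differences are that you split $R^r$ into $r$ copies of $R$ (so $r^{n-k}$ arises from choosing copies rather than from tensoring whole $R^r$ blocks) and that you justify $R/Ra\otimes_R R/Rb\cong R/\langle a,b\rangle$ by right-exactness, where the paper simply cites the corresponding isomorphism.
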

\begin{proof}
By definition, we have $T^0(M)\!=\!R$ and $T^1(M)\!=\!M$. There are isomorphisms $$R^r\!\otimes\!R^s\cong R^{rs},~~~ R^r\otimes R/\!Rb\cong (R/\!Rb)^r,~~~ R/\!Ra\otimes R/\!Rb\cong R/\!R\gcd(a,b).$$
Hence we have $T^n(R^r)\cong R^{r^n}$. For the general case, since tensor products are distributive w.r.t. direct sums, $T^n\!M$ is a direct sum of modules $M_1\!\otimes\!\ldots\!\otimes\!M_n$ in which every $M_i\in\{R^r\!,R/Ra_1,\ldots,R/Ra_s\}$. If such a module contains $n\!-\!k$ copies of $R^r$ and the rest is $R/Ra_{i_1},\ldots,R/Ra_{i_k}$ in some order (different reorderings count as distinct modules), then the commutativity isomorphisms imply that it is isomorphic to $R^r\!\otimes\!\ldots\!\otimes\!R^r \!\otimes\! R/Ra_{i_1}\!\otimes\!\ldots\!\otimes\!R/Ra_{i_k} \cong \big(R/\langle a_{i_1},\ldots,a_{i_k}\rangle\big)^{r^{n\!-\!k}}$. There are $\binom{n}{n-k}$ choices where $R^r$ are positioned, so there are $\binom{n}{k}$ copies of the latter module.
\end{proof}\vspace{2mm}

\subsection{Symmetric Powers} The \emph{$n$-th symmetric power} of an $R$-module $M$ is
$$\textstyle{S^n\!M=T^n\!M/\langle m_{\pi1}\!\otimes\!\ldots\!\otimes\!m_{\pi n}\!-\!m_1\!\otimes\!\ldots\!\otimes\!m_n;\,m_1,\ldots,m_n\!\in\!M, \pi\!\in\!S_n\rangle.}\vspace{1mm}$$

\begin{Prp}
If $M\cong R^r\!\oplus\!\bigoplus_{i=1}^s\!R/\!Ra_i$, then\vspace{-1mm} $$S^n\!M\cong R^{\binom{r+n\!-\!1}{n}}\!\oplus\bigoplus_{k=1}^n \bigoplus_{1\leq i_1\leq\ldots\leq i_k\leq s}\! \big(R/\!\langle a_{i_1},\ldots,a_{i_k}\rangle\big)^{\binom{r+n\!-\!k\!-\!1}{n\!-\!k}}.$$
\end{Prp}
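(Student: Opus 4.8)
The plan is to pass to the full symmetric algebra $S(M)=\bigoplus_{n\geq0}S^nM$ and exploit the fact that it converts direct sums into tensor products, so that the whole computation reduces to reading off a single graded presentation. First I would invoke the standard isomorphism $S(A\oplus B)\cong S(A)\otimes_R S(B)$ of graded $R$-algebras and apply it to $M\cong R^r\oplus\bigoplus_{i=1}^sR/Ra_i$, obtaining $S(M)\cong S(R^r)\otimes\bigotimes_{i=1}^sS(R/Ra_i)$. The two kinds of factors are elementary: $S(R^r)\cong R[x_1,\ldots,x_r]$, whose degree-$p$ part is free of rank $\binom{r+p-1}{p}$, and $S(R/Ra_i)\cong R[y_i]/(a_iy_i)$, whose degree-$d$ part is $\cong R/Ra_i$ for every $d\geq1$ (this follows either from right-exactness of $S$ applied to $R\overset{a_i}{\to}R\to R/Ra_i\to0$, or simply because a cyclic module has cyclic powers). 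Combining the factors yields the single presentation
$$S(M)\cong R[x_1,\ldots,x_r,y_1,\ldots,y_s]/(a_1y_1,\ldots,a_sy_s).$$

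Next I would recover $S^nM$ as the degree-$n$ part of this quotient ring. The crucial observation is that the relation ideal $I=(a_1y_1,\ldots,a_sy_s)$ is spanned over $R$ by scalar multiples $a_ix^\alpha y^{\beta+e_i}$ of single monomials, hence is a monomial $R$-submodule of the free module $R[x,y]=\bigoplus_{\alpha,\beta}Rx^\alpha y^\beta$. Consequently $I=\bigoplus_{\alpha,\beta}(I\cap Rx^\alpha y^\beta)$ with $I\cap Rx^\alpha y^\beta=\langle a_i;\,\beta_i\geq1\rangle\,x^\alpha y^\beta$, so the quotient splits as an $R$-module into $S(M)\cong\bigoplus_{\alpha,\beta}R/\langle a_i;\,i\in\mathrm{supp}\,\beta\rangle$. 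Restricting to total degree $|\alpha|+|\beta|=n$ then gives
$$\textstyle{S^nM\cong\bigoplus_{|\alpha|+|\beta|=n}R/\langle a_i;\,i\in\mathrm{supp}\,\beta\rangle.}$$

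Finally I would reorganize this direct sum to match the claimed formula. The summands with $\beta=0$ are the free ones: there are $\binom{r+n-1}{n}$ monomials $x^\alpha$ with $|\alpha|=n$, producing $R^{\binom{r+n-1}{n}}$. For $\beta\neq0$ I would group the summands by the exponent vector $\beta$, encoded as the nondecreasing list $1\leq i_1\leq\ldots\leq i_k\leq s$ (a multiset of size $k=|\beta|$) obtained by listing each index $i$ with multiplicity $\beta_i$; then $\langle a_i;\,i\in\mathrm{supp}\,\beta\rangle=\langle a_{i_1},\ldots,a_{i_k}\rangle$ since the ideal depends only on the support. For each such multiset the multiplicity is the number of $\alpha\in\N^r$ with $|\alpha|=n-k$, namely $\binom{r+n-k-1}{n-k}$, which is exactly the stated exponent. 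Summing over $k$ and over all size-$k$ multisets reproduces the formula.

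I expect the only genuine obstacle to be the middle step: justifying that $R[x,y]/I$ splits as a direct sum of cyclic modules in which the annihilator of $x^\alpha y^\beta$ is \emph{precisely} $\langle a_i;\,i\in\mathrm{supp}\,\beta\rangle$ and no larger. This is where the homogeneity of $I$ in the monomial basis does all the work—so that $I$ meets each line $Rx^\alpha y^\beta$ in exactly the ideal generated by the $a_i$ with $\beta_i\geq1$—while the remaining counting is a routine translation between monomials of a given degree and multisets.
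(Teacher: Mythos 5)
Your proof is correct, but it takes a genuinely different route of assembly than the paper's. The paper works degree by degree: it proves $S^n(R/I)\cong R/I$ for $n\geq 2$ (via the multiplication map $x_1\otimes\ldots\otimes x_n\mapsto x_1\cdots x_n$ with explicit inverse) and $S^n(A\oplus B)\cong\bigoplus_{i+j=n}S^i(A)\otimes S^j(B)$ (via the concatenation map), iterates the latter over the $s+1$ summands of $M$, and finishes by collapsing each tensor product of cyclic modules using $R/Ra\otimes R/Rb\cong R/\langle a,b\rangle$. You instead pass to the total symmetric algebra: your presentation $S(M)\cong R[x_1,\ldots,x_r,y_1,\ldots,y_s]/(a_1y_1,\ldots,a_sy_s)$ is exactly the paper's later Proposition 3.4, which the paper proves separately, and you then extract $S^nM$ via the monomial-submodule splitting $I\cap Rx^\alpha y^\beta=\langle a_i;\,\beta_i\geq 1\rangle\,x^\alpha y^\beta$. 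That splitting argument is sound: since the generators $a_iy_i$ are scalar multiples of monomials, expanding any combination $\sum_i f_i\,a_iy_i$ in the monomial basis shows the coefficient of $x^\gamma y^\delta$ lies in $\langle a_i;\,\delta_i\geq 1\rangle$, and conversely every such coefficient is attained — so the annihilator is pinned down exactly, and the worry you flag at the end is already discharged by the computation you sketch. At bottom the two proofs share the same structural facts ($S$ on cyclic modules and on direct sums, your algebra-level isomorphism $S(A\oplus B)\cong S(A)\otimes S(B)$ being the totalization of the paper's degreewise one), but the trade-offs differ: the paper's route needs no adjunction-style input such as $S(F/N)\cong S(F)/(N)$ and runs in strict parallel with its $T^n$ and $\Lambda^{\!n}$ proofs, while yours obtains the degreewise formula and the presentation of Proposition 3.4 in one stroke and replaces the ideal-sum manipulation with transparent monomial bookkeeping. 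One small caveat: the parenthetical ``a cyclic module has cyclic powers'' by itself only yields a surjection $R/Ra_i\twoheadrightarrow S^d(R/Ra_i)$, not an isomorphism; you should lean on the right-exactness argument you also offer (or the paper's explicit inverse $x\mapsto x\otimes 1\otimes\ldots\otimes 1$) — or simply observe that your monomial splitting renders this intermediate claim unnecessary, since it computes the annihilator of each $y_i^d$ directly.
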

\begin{proof}
By definition, we have $S^0(M)\!=\!R$ and $S^1(M)\!=\!M$. There holds $$\textstyle{S^n(R/I)\cong R/I\text{ for }n\!\geq\!2 ~~~~\text{ and }~~~~ S^n(A\!\oplus\!B)\cong \bigoplus_{i+j=n}\! S^{i}\!(A)\!\otimes\!S^{j}\!(B).}$$
The isomorphism $R\!\otimes\!\ldots\!\otimes\!R\!\rightarrow\!R$ that sends $x_1\!\otimes\!\ldots\!\otimes\!x_r\mapsto x_1\!\cdots\!x_r$, with inverse $x\!\mapsto\!x\!\otimes\!1\!\otimes\!\ldots\!\otimes\!1$, descends to an isomorphism $\frac{R}{I}\!\otimes\!\ldots\!\otimes\!\frac{R}{I}\!\rightarrow\!\frac{R}{I}$ (if any of the $x_i$ is in $I$, then so is $x_1\!\cdots\!x_r$) and to an isomorphism $S^n\big(\tfrac{R}{I}\big)\!\rightarrow\!\frac{R}{I}$ (the permutation relation is sent to $0$ since $R$ is commutative), so this justifies the first claim. The map $\bigoplus_{i+j=n}\! S^{i}\!(A)\!\otimes\!S^{j}\!(B)\longrightarrow S^n(A\!\oplus\!B)$ that sends $(a_1\!\otimes\!\ldots\!\otimes\!a_i)\!\otimes\!(b_1\!\otimes\!\ldots\!\otimes\!b_j)\mapsto a_1\!\otimes\!\ldots\!\otimes\!a_i\!\otimes\!b_1\!\otimes\!\ldots\!\otimes\!b_j$ is well defined (since $S_n$ is closed for concatenation) and surjective (since $\otimes$ are distributive and in $S^n\!M$ every pure tensor can be permuted) with inverse $(a_1\!+\!b_1)\!\otimes\!\ldots\!\otimes\!(a_n\!+\!b_n) \mapsto \sum_{k+l=n}(a_{i_1}\!\!\otimes\!\ldots\!\otimes\!a_{i_k})\!\otimes\!(b_{j_1}\!\!\otimes\!\ldots\!\otimes\!b_{j_l}\!)$.
\par Then inductively we get $S^n(M_1\!\oplus\!\ldots\!\oplus\!M_k)\cong \bigoplus_{i_1+\ldots+i_k=n}\! S^{i_1}\!(M_1)\!\otimes\!\ldots\!\otimes\!S^{i_k}\!(M_k)$. Therefore $S^n(R^r)\cong \bigoplus_{i_1+\ldots+i_r=n}\!R\cong R^{\binom{r+n\!-\!1}{n}}$ where $\binom{r+n-1}{n}\!=\!\big(\!\!\binom{r}{n}\!\!\big)$ is the number of submultisets in $[r]$ of cardinality $n$. Furthermore,
\begin{longtable}[c]{r@{ $\cong$ }l}
    $S^n\!\big(R^r\!\oplus\!\bigoplus_{i=1}^s\!\tfrac{R}{Ra_i}\big)$
    & $\bigoplus_{i_0+i_1+\ldots+i_s=n}\! S^{i_0}\!(R^r)\!\otimes\!S^{i_1}\!\big(\tfrac{R}{Ra_1}\big)\!\otimes\!\ldots\!\otimes\!S^{i_k}\!\big(\tfrac{R}{Ra_s}\big)$\\
    & $\bigoplus_{k=0}^n\!\bigoplus_{i_1+\ldots+i_s=k}\! S^{n\!-\!k}\!(R^r)\!\otimes\!S^{i_1}\!\big(\!\tfrac{R}{Ra_1}\!\big)\!\otimes\!\ldots\!\otimes\!S^{i_k}\!\big(\!\tfrac{R}{Ra_s}\!\big)$\\
    & $R^{\left(\!\!\binom{r}{n}\!\!\right)}\!\oplus\!\bigoplus_{k=1}^n\!\bigoplus_{i_1+\ldots+i_s=k}\! \big(S^{i_1}\!\big(\!\tfrac{R}{Ra_1}\!\big)\!\otimes\!\ldots\!\otimes\!S^{i_k}\!\big(\!\tfrac{R}{Ra_s}\!\big)\!\big)^{\!\left(\!\!\binom{r}{n\!-\!k}\!\!\right)}$\\
    & $R^{\left(\!\!\binom{r}{n}\!\!\right)}\!\oplus\!\bigoplus_{k=1}^n\!\bigoplus_{\sigma\in\left(\!\!\binom{[s]}{k}\!\!\right)}\! \big(R/\langle a_i; i\!\in\!\sigma\rangle\!\big)^{\!\left(\!\!\binom{r}{n\!-\!k}\!\!\right)}$,
\end{longtable}\vspace{-3mm}
where $\big(\!\!\binom{[s]}{k}\!\!\big)$ is the set of all $k$-element multisubsets of $\{1,\ldots,s\}$.
\end{proof}\vspace{2mm}

\subsection{Exterior Powers} The \emph{$n$-th exterior power} of an $R$-module $M$ is
$$\textstyle{\Lambda^{\!n}\!M=T^n\!M/\langle m_1\!\otimes\!\ldots\!\otimes\!m_n;\,m_1,\ldots,m_n\!\in\!M, m_i\!=\!m_j\text{ for some }i\!\neq\!j\rangle.}\vspace{1mm}$$

\begin{Prp}
If $M\cong R^r\!\oplus\!\bigoplus_{i=1}^s\!R/\!Ra_i$, then\vspace{-1mm}
$$\Lambda^{\!n}\!M\cong R^{\binom{r}{n}}\!\oplus\bigoplus_{k=1}^n \bigoplus_{1\leq i_1<\ldots<i_k\leq s}\! \big(R/\!\langle a_{i_1},\ldots,a_{i_k}\rangle\big)^{\binom{r}{n\!-\!k}}.$$
\end{Prp}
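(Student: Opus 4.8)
The plan is to imitate the proof of the symmetric case essentially verbatim, replacing each symmetric ingredient by its antisymmetric analogue. First I would record the trivial base cases $\Lambda^0 M\cong R$ and $\Lambda^1 M\cong M$, and then isolate the two structural facts that drive everything:
$$\Lambda^n(R/I)=0 \text{ for } n\geq2,\qquad\text{and}\qquad \Lambda^n(A\oplus B)\cong\bigoplus_{i+j=n}\Lambda^i(A)\otimes\Lambda^j(B).$$
The first of these replaces the symmetric identity $S^n(R/I)\cong R/I$; its effect is that, whereas in the symmetric computation each torsion exponent ranged over all of $\N$, here each torsion factor may be used at most once. That single change is precisely what replaces the multisubset index of the symmetric proposition by the set of genuine $k$-subsets $1\leq i_1<\ldots<i_k\leq s$, and replaces the multiset binomials by ordinary ones.

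To prove the vanishing, I would note that $R/I$ is cyclic, so every generator $\overline{m}_1\wedge\ldots\wedge\overline{m}_n$ of $\Lambda^n(R/I)$ equals $(r_1\cdots r_n)(\overline{1}\wedge\ldots\wedge\overline{1})$ after pulling a scalar out of each slot, and $\overline{1}\wedge\ldots\wedge\overline{1}$ vanishes once $n\geq2$ because two entries coincide. For the direct-sum decomposition I would define the map $\bigoplus_{i+j=n}\Lambda^i(A)\otimes\Lambda^j(B)\to\Lambda^n(A\oplus B)$ sending $(a_1\wedge\ldots\wedge a_i)\otimes(b_1\wedge\ldots\wedge b_j)\mapsto a_1\wedge\ldots\wedge a_i\wedge b_1\wedge\ldots\wedge b_j$, check well-definedness and surjectivity exactly as in the symmetric case, and build the inverse by expanding $(a_1+b_1)\wedge\ldots\wedge(a_n+b_n)$ multilinearly into its $2^n$ monomials and sorting each one into $a$-before-$b$ order.

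With these in hand the endgame is purely combinatorial and follows the symmetric proof line for line. Iterating the decomposition gives $\Lambda^n(M_1\oplus\ldots\oplus M_\ell)\cong\bigoplus_{i_1+\ldots+i_\ell=n}\Lambda^{i_1}(M_1)\otimes\ldots\otimes\Lambda^{i_\ell}(M_\ell)$; applied to $M\cong R^r\oplus\bigoplus_{i=1}^sR/Ra_i$ the vanishing forces each of the $s$ torsion exponents into $\{0,1\}$, so a surviving summand is determined by the subset $\{i_1<\ldots<i_k\}\subseteq\{1,\ldots,s\}$ of torsion factors it uses together with the remaining exponent $n-k$ on the free part. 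The same decomposition applied to $R^r=R\oplus\ldots\oplus R$ (each copy of $R$ again usable at most once) gives $\Lambda^{n-k}(R^r)\cong R^{\binom{r}{n-k}}$, and the tensor product $R/Ra_{i_1}\otimes\ldots\otimes R/Ra_{i_k}\cong R/\langle a_{i_1},\ldots,a_{i_k}\rangle$ from the tensor-power section then yields exactly the stated formula, with the $k=0$ term contributing the free summand $R^{\binom{r}{n}}$.

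The main obstacle is the sign bookkeeping that is absent from the symmetric argument. In $S^n$ both the concatenation map and its expanding inverse are sign-free, but in $\Lambda^n$ sorting the factors of each monomial into $a$-before-$b$ order introduces a shuffle sign, so the inverse must record the sign of every $(i,j)$-shuffle. Verifying that the map is independent of the chosen representatives then reduces to checking that these shuffle signs are exactly compatible with the antisymmetry relations defining both sides. This is the one genuinely new point over the symmetric computation; once it is settled, everything else transfers mechanically.
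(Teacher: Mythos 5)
Your proposal follows the paper's own proof essentially verbatim: the same two key lemmas ($\Lambda^{\!n}(R/I)\cong0$ for $n\!\geq\!2$ via pulling scalars out to reach $1\otimes\ldots\otimes1$, and the concatenation isomorphism $\Lambda^{\!n}(A\oplus B)\cong\bigoplus_{i+j=n}\Lambda^{\!i}(A)\otimes\Lambda^{\!j}(B)$ with the shuffle-expansion inverse), followed by the same induction over summands and the identification $R/Ra_{i_1}\otimes\ldots\otimes R/Ra_{i_k}\cong R/\langle a_{i_1},\ldots,a_{i_k}\rangle$. Your one addition --- tracking the shuffle signs in the inverse map --- is a point the paper's displayed formula silently glosses over (it writes the same sign-free expansion as in the symmetric case, with signs only implicit in the remark that pure tensors ``can be permuted up to sign''), so your version is if anything slightly more careful on the same route.
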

\begin{proof}
By definition, we have $\Lambda^{\!0}(M)\!=\!R$ and $\Lambda^{\!1}(M)\!=\!M$. There holds $$\textstyle{\Lambda^{\!n}(R/I)\cong 0\text{ for }n\!\geq\!2 ~~~~\text{ and }~~~~ \Lambda^{\!n}(A\!\oplus\!B)\cong \bigoplus_{i+j=n}\! \Lambda^{\!i}\!(A)\!\otimes\!\Lambda^{\!j}\!(B).}$$
If $n\!\geq\!2$, then any pure tensor $x_1\!\otimes\!\ldots\!\otimes\!x_n\!\in\!\Lambda^{\!n}(R/I)$ is equal to the element $1\!\otimes\!\ldots\!\otimes\!1\!\otimes\!x_1\!\cdots\!x_n$, so it is contained in the submodule of relations, which shows that $\Lambda^{\!n}(R/I)\cong0$. The morphism $\bigoplus_{i+j=n}\! \Lambda^{\!i}\!(A)\!\otimes\!\Lambda^{\!j}\!(B)\longrightarrow \Lambda^{\!n}(A\!\oplus\!B)$ that sends $(a_1\!\otimes\!\ldots\!\otimes\!a_i)\!\otimes\!(b_1\!\otimes\!\ldots\!\otimes\!b_j)\mapsto a_1\!\otimes\!\ldots\!\otimes\!a_i\!\otimes\!b_1\!\otimes\!\ldots\!\otimes\!b_j$ is well defined (any duplicate in $\Lambda^{\!i}\!(A)$ or $\Lambda^{\!j}\!(B)$ leads to a duplicate in $\Lambda^{\!n}(A\!\oplus\!B)$) and surjective (since $\otimes$ are distributive and in $\Lambda^{\!n}M$ every pure tensor can be permuted up to sign) with inverse $(a_1\!+\!b_1)\!\otimes\!\ldots\!\otimes\!(a_n\!+\!b_n) \mapsto \sum_{k+l=n}(a_{i_1}\!\!\otimes\!\ldots\!\otimes\!a_{i_k})\!\otimes\!(b_{j_1}\!\!\otimes\!\ldots\!\otimes\!b_{j_l}\!)$.
\par Then inductively we get $\Lambda^{\!n}(M_1\!\oplus\!\ldots\!\oplus\!M_k)\cong \bigoplus_{i_1+\ldots+i_k=n}\! \Lambda^{\!i_1}\!(M_1)\!\otimes\!\ldots\!\otimes\!\Lambda^{\!i_k}\!(M_k)$. Therefore $\Lambda^{\!n}(R^r)\cong \bigoplus_{i_1,\ldots,i_r\in\{0,1\},\,i_1+\ldots+i_r=n}\!R\cong R^{\binom{r}{n}}$. Furthermore,
\begin{longtable}[c]{r@{ $\cong$ }l}
    $\Lambda^{\!n}\!\big(R^r\!\oplus\!\bigoplus_{i=1}^s\!\tfrac{R}{Ra_i}\big)$
    & $\bigoplus_{i_0+i_1+\ldots+i_s=n}\! \Lambda^{\!i_0}\!(R^r)\!\otimes\!\Lambda^{\!i_1}\!\big(\tfrac{R}{Ra_1}\big)\!\otimes\!\ldots\!\otimes\!\Lambda^{\!i_s}\!\big(\tfrac{R}{Ra_s}\big)$\\
    & $\bigoplus_{k=0}^n\!\bigoplus_{i_1+\ldots+i_s=k}\! \Lambda^{\!n\!-\!k}\!(R^r)\!\otimes\! \Lambda^{\!i_1}\!\big(\!\tfrac{R}{Ra_1}\!\big)\!\otimes\!\ldots\!\otimes\!\Lambda^{\!i_s}\!\big(\!\tfrac{R}{Ra_s}\!\big)$\\
    & $R^{\binom{r}{n}}\!\oplus\!\bigoplus_{k=1}^n\!\bigoplus_{i_1+\ldots+i_s=k}\! \big(\Lambda^{\!i_1}\!\big(\!\tfrac{R}{Ra_1}\!\big)\!\otimes\!\ldots\!\otimes\!\Lambda^{\!i_s}\!\big(\!\tfrac{R}{Ra_s}\!\big)\!\big)^{\!\binom{r}{n\!-\!k}}$\\
    & $R^{\binom{r}{n}}\!\oplus\!\bigoplus_{k=1}^n\!\bigoplus_{\sigma\in\binom{[s]}{k}}\! \big(R/\langle a_i; i\!\in\!\sigma\rangle\!\big)^{\!\binom{r}{n\!-\!k}}$,
\end{longtable}\vspace{-3mm}
where $\binom{[s]}{k}$ is the set of all $k$-element subsets of $\{1,\ldots,s\}$.
\end{proof}\vspace{2mm}

\subsection{Graded Algebras} The graded $R$-modules $$\textstyle{T(M)=\bigoplus_{n\in\N}T^n(M),\; S(M)=\bigoplus_{n\in\N}S^n(M),\; \Lambda(M)=\bigoplus_{n\in\N}\Lambda^{\!n}(M)}$$ become associative unital graded $R$-algebras via $$(x_1\!\otimes\!\ldots\!\otimes\!x_i)\cdot(y_1\!\otimes\!\ldots\!\otimes\!y_j)=x_1\!\otimes\!\ldots\!\otimes\!x_i \!\otimes\! y_1\!\otimes\!\ldots\!\otimes\!y_j.$$

\begin{Prp}
If $M\cong R^r\!\oplus\!\bigoplus_{i=1}^s\!R/\!Ra_i$, then
\begin{longtable}[c]{c}
$T(M)\cong R\langle x_1,\ldots,x_r,y_1,\ldots,y_s|a_1y_1,\ldots,a_sy_s\rangle$,\\
$S(M)\cong R[x_1,\ldots,x_r,y_1,\ldots,y_s|a_1y_1,\ldots,a_sy_s]$,\\
$\Lambda(M)\cong \Lambda[x_1,\ldots,x_r,y_1,\ldots,y_s|a_1y_1,\ldots,a_sy_s]$.\\
\end{longtable}\vspace{-2mm}
\end{Prp}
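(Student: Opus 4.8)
The plan is to prove the three isomorphisms in parallel, bypassing any comparison of graded pieces (the preceding propositions already pin those down) and instead invoking the defining universal properties of $T$, $S$, $\Lambda$ together with the Yoneda lemma. The guiding principle is that each of these functors turns a presentation of the module $M$ into a presentation of the corresponding algebra, so the whole proof reduces to identifying functors of points.

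First I would record a presentation of $M$. Letting $x_1,\ldots,x_r$ be a basis of the summand $R^r$ and $y_1,\ldots,y_s$ the canonical generators of the cyclic summands $R/Ra_i$, the isomorphism $M\cong R^r\oplus\bigoplus_{i=1}^sR/Ra_i$ exhibits $M$ as the cokernel of the map $R^s\to R^{r+s}$ carrying the $i$-th basis vector to $a_iy_i$; equivalently $M$ is generated by $x_1,\ldots,x_r,y_1,\ldots,y_s$ with defining relations $a_1y_1=\cdots=a_sy_s=0$. Hence for any associative unital $R$-algebra $A$, an element of $\Hom_{R\text{-Mod}}(M,A)$ is precisely a tuple $(\alpha_1,\ldots,\alpha_r,\beta_1,\ldots,\beta_s)\in A^{r+s}$ subject only to the constraints $a_i\beta_i=0$ for $1\le i\le s$ (the $\alpha_j$ being free).

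Next I would apply the universal properties. For all associative unital $A$ one has $\Hom_{R\text{-Alg}}(T(M),A)\cong\Hom_{R\text{-Mod}}(M,A)$ (here $T$ and $S$ are genuine left adjoints to forgetful functors), the same holds for $S(M)$ once $A$ is required commutative, and $\Hom_{R\text{-Alg}}(\Lambda(M),A)\cong\{f\in\Hom_{R\text{-Mod}}(M,A):f(m)^2=0\text{ for all }m\in M\}$ for all associative $A$. On the presented side, the free objects $R\langle x_i,y_j\rangle$, $R[x_i,y_j]$ and $\Lambda[x_i,y_j]$ corepresent, respectively, the functor of arbitrary $(r+s)$-tuples in $A$, of $(r+s)$-tuples in a commutative $A$, and of $(r+s)$-tuples all of whose $R$-linear combinations square to zero; forming the quotient by the two-sided ideal generated by $a_1y_1,\ldots,a_sy_s$ adjoins exactly the extra constraints $a_i\beta_i=0$. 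Comparing, both $T(M)$ and $R\langle x_i,y_j\mid a_iy_i\rangle$ corepresent $A\mapsto\{(\alpha,\beta)\in A^{r+s}:a_i\beta_i=0\}$, so Yoneda produces an $R$-algebra isomorphism, which by construction is the evident map $x_i\mapsto x_i$, $y_j\mapsto y_j$. The symmetric case is word-for-word identical after restricting the test algebras to commutative ones.

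The step needing the most care is the exterior algebra, because its universal property is governed by a square-zero condition rather than by a forgetful functor to modules (the set $\{a\in A:a^2=0\}$ is not a submodule). The clean way around this is never to decompose square-zero-ness into generator-wise relations — which would be delicate in characteristic $2$ — but to keep it as a condition on the whole image: since $M=R^{r+s}/\langle a_iy_i\rangle$, a module map $f\colon R^{r+s}\to A$ killing each $a_iy_i$ is the same datum as a module map $\bar f\colon M\to A$, and $f(m)^2=\bar f(\bar m)^2$ with $m\mapsto\bar m$ surjective, so ``$f(m)^2=0$ for all $m\in R^{r+s}$ together with $a_if(y_i)=0$'' is literally equivalent to ``$\bar f(\mu)^2=0$ for all $\mu\in M$''. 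Thus $\Lambda[x_i,y_j]/\langle a_iy_i\rangle$ and $\Lambda(M)$ corepresent the same functor and are isomorphic. As a final consistency check I would confirm that the degree-$n$ graded components of the three presented algebras reproduce the formulas for $T^n\!M$, $S^n\!M$, $\Lambda^{\!n}\!M$ from the preceding propositions: a length-$n$ monomial involving the torsion generators $y_{i_1},\ldots,y_{i_k}$ spans a copy of $R/\langle a_{i_1},\ldots,a_{i_k}\rangle$, and counting such monomials (ordered words for $T$, nondecreasing multisets for $S$, strictly increasing subsets for $\Lambda$) recovers the stated multiplicities.
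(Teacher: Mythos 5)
Your proof is correct, and it takes a genuinely different route from the paper's. The paper argues by decomposition: it first computes the cyclic base cases directly --- $T(R)\cong S(R)\cong R[x]$, $\Lambda(R)\cong\Lambda[x]$, $T(R/Ra)\cong S(R/Ra)\cong R[x|ax]$, $\Lambda(R/Ra)\cong\Lambda[x|ax]$, identifying the multiplication via $1^{\otimes i}\cdot 1^{\otimes j}=1^{\otimes(i+j)}$ --- and then uses that $T$, $S$, $\Lambda$ carry direct sums of modules to free products, tensor products, and skew tensor products of algebras, together with the fact that presentations concatenate under $\ast$, $\otimes$, $\otimes'$. You never decompose $M$ into summands at all: you read off the presentation $M\cong R^{r+s}/\langle a_1y_1,\ldots,a_sy_s\rangle$ and compare corepresented functors, using the adjunction for $T$ (and for $S$ over commutative test algebras) and the square-zero universal property for $\Lambda$, then conclude by Yoneda; this is sound, and the natural bijections you exhibit are indeed natural in $A$, so the Yoneda step is legitimate. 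Your handling of the exterior case is the delicate point and you get it right: transporting ``$f(m)^2=0$ for all $m$'' along the surjection $R^{r+s}\twoheadrightarrow M$ keeps the condition matched to the paper's definition of $\Lambda^{\!n}$ as the quotient by repeated-entry tensors (square-zero, which is strictly stronger than anticommutativity when $2$ is not invertible), whereas a careless generator-wise reformulation risks corepresenting the wrong functor. Comparing the two approaches: the paper's is concrete, reuses its module-level computations of $T^n\!M$, $S^n\!M$, $\Lambda^{\!n}\!M$, and makes the graded pieces visible, but it needs the monoidal facts that $T$, $S$, $\Lambda$ turn $\oplus$ into $\ast$, $\otimes$, $\otimes'$ plus the gluing of presentations under those products; yours needs none of that machinery and applies verbatim to any presented module --- the direct-sum shape of $M$ enters only through the diagonal relation matrix, so no classification over a PID is used --- at the price of invoking the universal-property formalism. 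Your closing consistency check against the graded formulas is not needed for the argument, but it is a sensible sanity check and correctly recovers the multiplicities.
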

These are quotients of noncommutative/commutative/anticommutative polynomial algebras by a two-sided ideal generated by all $a_iy_i$.
\begin{proof} If $M\!=\!R$ or $M\!=\!\frac{R}{Ra}$, then we have isomorphisms of $R$-modules
$$T(M)\!\cong\!S(M)\!\cong\!R\!\oplus\!M\!\oplus\!M\!\oplus\!M\!\oplus\!\ldots\text{ and }\Lambda(M)\!\cong\!R\!\oplus\!M.$$
The generator in each degree is $1^{\otimes n}\!\equiv\!1\!\otimes\!\ldots\!\otimes\!1$, so $1^{\otimes i}\!\cdot\!1^{\otimes j}=1^{\otimes i+j}$, and we have
$$T(R)\!\cong\!S(R)\!\cong\!R[x],\; \Lambda(R)\cong\Lambda[x],$$ $$T(\tfrac{R}{Ra})\!\cong\!S(\tfrac{R}{Ra})\!\cong\!R[x|ax],\; \Lambda(\tfrac{R}{Ra})\cong\Lambda[x|ax].$$
Then the properties of free/tensor/skew-tensor products imply\vspace{-1mm}
\begin{longtable}[c]{c}
$T(M)\cong \bigast_{i=1}^rT(R)\ast\bigast_{i=1}^sT(\tfrac{R}{Ra_i})$,\\
$S(M)\cong \bigotimes_{i=1}^rT(R)\otimes\bigotimes_{i=1}^sT(\tfrac{R}{Ra_i})$,\\
$\Lambda(M)\cong {\bigotimes\!'}_{\!\!i=1}^rT(R)\otimes'{\bigotimes\!'}_{\!\!i=1}^sT(\tfrac{R}{Ra_i})$.\\
\end{longtable}\vspace{-1mm}
The result then follows from the fact that $R\langle x_i|f_j\rangle\ast R\langle y_k|g_l\rangle \cong R\langle x_i,y_k|f_j,g_l\rangle$ and $R[ x_i|f_j]\otimes R[ y_k|g_l] \cong R[ x_i,y_k|f_j,g_l]$ and $\Lambda[ x_i|f_j]\ast \Lambda[ y_k|g_l] \cong \Lambda[ x_i,y_k|f_j,g_l]$.
\end{proof}\vspace{2mm}

\subsection{Interpretations} The following descriptions will be very vague, I am sorry. Let us imagine a persistence module $M=H_k\big(P_\ast(\Delta_\ast)\big)$ (i.e. the $k$-th homology of the persistence complex for $\Delta_\ast$ over $R\!=\!K[t]$) as a `system of $k$-dimensional holes' that `vary through time'. If the ground field $K$ has nonzero characteristic, then `hole' has a broad meaning. An element $x\!\in\!M$ is a $K$-linear combination of holes (let's call it a `cave'), and over time $x,tx,t^2x,t^3x,\ldots$ some of its parts get filled in. An element of the Cartesian power $\prod^n\!M\!=\!\bigoplus^n\!M$ corresponds to the disjoint union of $n$ caves: they are independent of each other, since multiplying by $t$ affects each component separately.
\par Recall that all our tensor products are over $R$, not just over $K$. A pure element $x_1\!\otimes\!\ldots\!\otimes\!x_n\in T^nM$ is a sequence of $n$ caves, but now they affect each other, since $(tx)\!\otimes\!y\!=\!x\!\otimes\!(ty)$. If $x$ dies after $i$ steps and $y$ was born more than $i$ steps ago, then $x\!\otimes\!y\!=\!0$ is filled in. If $x_i\!=\!0$ is filled in, then so is the whole sequence $x_1\!\otimes\!\ldots\!\otimes\!x_i\!\otimes\!\ldots\!\otimes\!x_n$. A pure element in $S^n$ is a multisubset of $n$ caves, and a pure element of $\Lambda^{\!n}$ is a subset of $n$ caves: again each cave affects the other, but now their ordering is not important. Multiplication in $T(M),S(M),\Lambda(M)$ means we take two sequences/multisets/sets and make them dependent on each other.

\vspace{4mm}
\section{Other module powers}
In this section, we are interested in the quotients of $T^n\!M$ by a given group action, where $M\!=\!R^r\!\oplus\!\bigoplus_{i=1}^sR/Ra_i$.\\[2mm]

\noindent The \emph{$n$-th cyclic power} of $M$, which appears e.g. in \cite[p.53]{citeLodayCH}, is $$T^n_C(M)=T^n(M)/\langle m_1\!\otimes\!m_2\!\otimes\!\ldots\!\otimes\!m_n \!-\! m_2\!\otimes\!\ldots\!\otimes\!m_n\!\otimes\!m_1\rangle.$$
The \emph{$n$-th dihedral power} of $M$, which appears e.g. in \cite[p.169]{citeLodayCH}, is $$T^n_D(M)=T^n(M)/\big\langle
\begin{smallmatrix}m_1\otimes m_2\otimes\ldots\otimes m_n - m_2\otimes\ldots\otimes m_n\otimes m_1,\\
m_1\otimes m_2\otimes\ldots\otimes m_n - m_n\otimes\ldots\otimes m_2\otimes m_1\end{smallmatrix}\big\rangle.$$
They are used (with additional sign conventions) to define the cyclic and dihedral (co)homology of associative algebras.
\par More generally, let $G$ be a finite group with a given action on $[n]$, or equivalently, let $G\!\leq\!S_n$ be a subgroup of the symmetric group. The \emph{$n$-th $G$-power} of $M$ is $$T^n_G(M)=T^n(M)/\big\langle m_{\pi1}\!\otimes\!\ldots\!\otimes\!m_{\pi n}\!-\!m_1\!\otimes\!\ldots\!\otimes\!m_n;\,m_1,\ldots,m_n\!\in\!M, \pi\!\in\!G\big\rangle.$$
Notice that the submodule of all relations equals the submodule generated by relations where $\pi$ runs through all generators of $G$ instead of all elements of $G$. For instance, the relations of the symmetric power $T^n_S(M)\!=\!S^n(M)$ are generated by the relations obtained from adjacent transpositions $\pi\!=\!(i,i\!+\!1)$ for $1\!\leq\!i\!<\!n$.\\[2mm]

\par Methods from the proofs in \ref{3.TnSnEn} do not work: the map $T^i_C(A)\!\otimes\!T^j_C(B)\!\longrightarrow\!T^n_C(A\!\oplus\!B)$ that sends $(a_1\!\otimes\!\ldots\!\otimes\!a_i) \!\otimes\! (b_1\!\otimes\!\ldots\!\otimes\!b_j) \!\longmapsto\! a_1\!\otimes\!\ldots\!\otimes\!a_i \!\otimes\! b_1\!\otimes\!\ldots\!\otimes\!b_j$ is not well defined, since it doesn't respect the cyclic relations. However, we still have $$T^n_G(R/I)\cong R/I$$ for any ideal $I\!\unlhd\!R$ and $n\!\geq\!1$, because the map $x_1\!\otimes\!\ldots\!\otimes\!x_n \mapsto x_1\!\cdots\!x_n$ respects all relations of $G$ since $R$ is commutative. Also, $$T^n_C(R^r)\cong R^{c_{r,n}} \;\;\text{ and }\;\; T^n_D(R^r)\cong R^{d_{r,n}}$$ where $c_{r,n}=\frac{1}{n}\sum_{d|n}\phi(d)r^{n\!/\!d}$ and $d_{r,n}=\icases{c_{r,n}/2+(r+1)r^{n/2}/4}{n\text{ even}}{c_{r,n}/2+r^{(n+1)/2}/2}{n\text{ odd}}{\scriptstyle}{-1pt}{\Big}$ are the number of necklaces and bracelets. In general, $G\!\leq\!S_n$ acts on $[n]$, hence it acts on $[r]^{[n]}$ (the basis elements of $T^n\!R^r$) via $(\pi.f)(i)\!=\!f(\pi.i)$, and then $$T^n_G(R^r)\cong R^{g_{r,n}}$$ where $g_{r,n}$ is the number of orbits of the action of $G$ on $[r]^{[n]}$. Burnside's lemma tells us that if a group $G$ acts on a set $X$, then the number of orbits times the size of $G$ equals the number of all fixed points: $$\textstyle{|X/G|=\frac{1}{|G|}\sum_{g\in G}|X^g|}$$ where $X/G\!=\!\{[x];\, x\!\in\!X, x\!\sim\!x'\text{ iff }\exists g\!\in\!G\!:g.x\!=\!x'\}$ and $X^g\!=\!\{x\!\in\!X;\, g.x\!=\!x\}$. If $G\!\leq\!S_n$ and $X\!=\![r]^{[n]}$, we have $\pi.f\!=\!f$ iff $f$ is constant on the orbits of $\pi$, so $|X^\pi|\!=\!r^{|[n]/\langle\pi\rangle|}$ and the above formula can be applied to compute $g_{r,n}=|X/G|$.  More generally, we have $T^n\big((R/I)^r\big)\cong(R/I)^{g_{r,n}}$ for any ideal $I$ of $R$. \\[2mm]

\par Now let $M\!=\!R^r\!\oplus\!\bigoplus_{i=1}^sR/Ra_i$ be arbitrary. If we allow the elements $a_i$ to be zero, then we do not lose any generality by assuming $M\!=\!\bigoplus_{i=1}^sR/Ra_i$. The $R$-module $T^n\!M$ has a minimal set of generators
$$\big\{x_1\!\otimes\!\ldots\!\otimes\!x_n;\, x_1,\ldots,x_n \!\in\! \{1\,\mod\,a_1,\ldots,1\,\mod\,a_s\}\big\},$$
where $(1\,\mod\,a_{i_1})\!\otimes\!\ldots\!\otimes\!(1\,\mod\,a_{i_n})$ spans the direct summand $R/\langle a_{i_1},\ldots,a_{i_n}\rangle$ (see the proof of \ref{Tn}). These generators  correspond to maps $f\!:[n]\!\rightarrow\![s]$ via the rule $i_1\!=\!f(1),\ldots,i_n\!=\!f(n)$. The relations induced by $G$ that produce $T^n_GM$ out of $T^n\!M$ act on the above set of generators, i.e. $G$ acts on $[s]^{[n]}$. Since in general $\frac{R}{\langle a_{i_1}\!,\ldots,a_{i_n}\rangle}\!\oplus\!\frac{R}{\langle b_{j_1}\!,\ldots,b_{j_m}\rangle}/\langle (1,0)\!-\!(0,1)\rangle \cong\frac{R}{\langle a_{i_1}\!,\ldots,a_{i_n}\!, b_{j_1}\!,\ldots,b_{j_m}\rangle}$, we obtain the formula 
$$\textstyle{T^n_G\:\cong\, \bigoplus_{F\in[s]^{[n]}\!/G} R/\langle a_{f(i)};\, i\!\in\![n], f\!\in\!F\rangle.}$$
Here $F$ runs through all orbits of the action of $G$ on $[s]^{[n]}$. Among $a_{f(i)}$ the zero elements can be removed from the submodule, and then that submodule is generated by $\gcd$ of all remaining $a_{f(i)}$ when $R$ is a B\'{e}zout domain.\vspace{4mm}

\section{Conclusion} For any module of the form $M=R^r\!\oplus \bigoplus_{i=1}^s\!R/Ra_i$, we can compute the modules $T^n\!M, S^n\!M, \Lambda^{\!n}\!M$ and $T^n_GM$ directly from the relations $a_1,\ldots,a_n$, no time consuming matrix computations are needed. When $M$ is a persistence module, this enables us to make combinatorial estimations on the size and number of all the barcodes in the various tensor powers of $M$.\vspace{4mm}

\end{document}